\newcommand{\N}{\mathbb{N}}
\newcommand{\R}{\mathbb{R}}
\newtheorem{theorem}{Theorem}[section]
\theoremstyle{definition}
\theoremstyle{remark}
\numberwithin{equation}{section}
\begin{document}

\title[Regularized Factorization Method]{Regularization of the Factorization Method with Applications to Inverse Scattering}


\author{Isaac Harris}
\address{Department of Mathematics, Purdue University, West Lafayette, IN 47907}
\email{harri814@purdue.edu}
\thanks{The research of I. Harris is partially supported by the NSF DMS Grant 2107891.}


\subjclass[2010]{Primary 35J05, 35Q81, 46C07}

\date{}

\begin{abstract}
Here we discuss a regularized version of the factorization method for positive operators acting on a Hilbert Space. The factorization method is a {\it qualitative} reconstruction method that has been used to solve many inverse shape problems. In general, qualitative methods seek to reconstruct the shape of an unknown object using little to no a priori information. The regularized factorization method presented here seeks to avoid numerical instabilities in the inversion algorithm. This allows one to recover unknown structures  in a computationally simple and analytically rigorous way. We will discuss the theory and application of the regularized factorization method to examples coming from acoustic inverse scattering. Numerical examples will also be presented using synthetic data to show the applicability of the method. 
\end{abstract}

\maketitle

\section{Introduction}
In this paper, we will discuss a regularized version of the factorization method as well as {\color{black}its} applications scattering theory. We will briefly review the theoretical framework that was developed in \cite{RegFMme}. The factorization method (see for e.g. \cite{FM-wave,Gebauer,FMheat,DOT-RtR,RtR,firstFM,kirschpp,kirschbook}) is a method used to solve inverse shape problems and  fall under the category of {\it qualitative methods}. Qualitative methods are otherwise {\color{black}referred} to as non-iterative or direct methods. In many applications it is optimal to use qualitative methods rather than applying non-linear optimization techniques for two reasons: first is that optimization methods require a priori information (to construct an initial guess) that may not be readily available such as the number of regions to be recovered, second is that these methods can be computationally expensive and highly {\color{black}ill-conditioned}. All qualitative methods seek to recover the shape of an unknown region from little a priori information by relating the support of the region to the range of the `measured' data operator. These methods where first introduced in \cite{CK} and are frequently used in non-destructive testing where one is given  measurements on the surface (or exterior) of an object and one tries to reconstruct interior structures. This has many {\color{black}applications} in the area of medical imaging and non-destructive testing in engineering.

The factorization method solves the inverse shape problem by appealing to Picard's criteria for compact operators. To this end, a range test is used to determine the support of the unknown region denoted $D$. In general, we have that 
$$ z \in D \iff \ell_z \in \text{Range} \big( A^{1/2} \big)$$ 
where $\ell_z$ is known and the positive compact operator $A$ is given by the measurements. In order to apply Picard's criteria to reconstruct the unknown region a series is computed where one divides by the sequence of eigenvalues (or singular values) of the compact operator $A$. Since this sequence {\color{black}tends} to zero (usually rapidly) this could result in numerical instabilities. Therefore, we will develop a regularization strategy for the  factorization method motivated by the {\color{black}previous} works in \cite{arens,arens2,GLSM,Harris-Rome,RegFM}. In \cite{arens,arens2} the linear sampling method was studied by appealing to the analytical techniques in the factorization method and then applying a suitable regularization strategy. Whereas in \cite{GLSM} a new qualitative method known as the generalized linear sampling method was developed and uses a specific cost-functional for the regularization scheme in applying the linear sampling method. What we present here is mainly influenced by \cite{Harris-Rome,RegFM}. Loosely speaking, we have the result that for a positive compact operator $A: X \to X^*$ where $X$ is a Hilbert space and $X^*$ is the corresponding dual-space then 
$$\ell \in\text{Range} \big( A^{1/2} \big)\iff \liminf\limits_{\alpha \to 0} \langle x_\alpha \, , Ax_\alpha \rangle_{X\times X^*} < \infty$$
where $x_\alpha$ {\color{black}(defined below)} is the regularized solution to $Ax=\ell$. The regularization scheme that is used to compute $x_\alpha$ can be taken to be any of the standard techniques i.e. Tikhonov regularization, Spectral cutoff and Landweber iteration. Here $\langle \cdot \, , \cdot \rangle_{X\times X^*}$ is the sesquilinear dual-pairing between $X$ and $X^*$. The main analytical tool one needs to prove this result is the spectral decomposition for the given positive compact operator $A: X \to X^*$.

The {\color{black}preceding} sections are organized as follows. First, we will briefly discuss the theory behind the regularized version of the factorization method for a  positive compact operator $A: X \to X^*$ where $X$ is a Hilbert space. The analysis presented here was initially studied in \cite{RegFM}. Then, we consider two inverse shape problems coming from inverse scattering. First, we will consider the problem of recovering an isotropic scatterer from far-field measurements. Lastly, we will consider the problem of recovering a sound soft scatterer from near-field measurements.

\section{Regularized Factorization Method}\label{RFM}
In this section, we will discuss the theoretical {\color{black}framework} that was developed in \cite{RegFMme} for the regularized factorization method. The analysis here generalizes the main result in \cite{Harris-Rome}. To begin, we assume that we have a given data operator denoted by $A: X \to X^*$ acting on the Hilbert space $X$ that is positive and compact. Again, we note that here we take the notation that $X^*$ denotes the dual space of $X$ as well as $\langle \cdot \, , \cdot \rangle_{X\times X^*}$ denoting the sesquilinear dual-product between $X$ and $X^*$. Furthermore, assume that there is a separable Hilbert pivoting space $H$  with dense inclusions $X \subseteq H \subseteq X^*$ i.e. a Gelfand triple of Hilbert spaces. 

Now in \cite{RegFMme} it is proven that the operator $A$ has a spectral decomposition provided that either $X$ is a complex Hilbert space or the bilinear form
$$(x,y) \longmapsto  \langle y \, , A x \rangle_{X\times X^*} \quad \text{ for any } \quad  x,y \in X$$
is symmetric. 
Under these assumptions we have that 
\begin{align}\label{Adecomp}
A x = \sum \lambda_n  ( x,x _n)_{X} \,  \ell_n \quad \text{ or } \quad A x= \sum \lambda_n  \langle x,\ell _n \rangle_{X\times X^*} \,  \ell_n
\end{align}
for any $x \in X$. Here the decreasing sequence $\lambda_n \in {\R}_{>0}$ converges to zero {\color{black}whereas} $\{x_n \}_{n \in \N}$ is an orthonormal basis of $X$ and $\{\ell_n \}_{n \in \N}$ is an orthonormal basis of $X^*$. Moreover, $\{\ell_n \}_{n \in \N}$ is the corresponding dual-basis for $\{x_n \}_{n \in \N}$ such that 
$$\langle x_m \, , \ell_n \rangle_{X\times X^*} = \delta_{mn} \quad \text{ for any } \quad n,m \in \N.$$
This gives that $\{\lambda_n ; x_n ; \ell_n \} \in \R_{>0}\times X\times X^*$ is the singular value decomposition of the compact operator $A$. Now just as in \cite{kirschipbook} we can define the regularized solution of $Ax=\ell$ to be $x_\alpha$ which is given by 
\begin{align}\label{regsolu}
x_\alpha =  \sum \frac{\phi(\lambda_n ; \alpha)}{\lambda_n} \overline{ \langle x_n,\ell  \rangle}_{X\times X^*} \, x_n.
\end{align}
The real-valued function $\phi(t ; \alpha)$ denotes the filter associated with a given  regularization technique. Here we will assume that $\phi(t ; \alpha): \big(0 ,\lambda_1 \big] \to \R_{\geq0}$ satisfies that for all $0 < t \leq  \lambda_1$
$$\lim\limits_{\alpha \to 0} \phi(t ; \alpha) =1 \quad \text{ and } \quad  \phi(t ; \alpha) \leq C_{\text{reg}} \quad \text{for all} \,\, \alpha>0.$$

Now, {\color{black}due} to the fact that $A$ is positive and compact we have that there is a bounded linear `square root' operator  denoted $Q: X \to H$ such that $A=Q^* Q$ where the adjoint $Q^*$ is defined by  
$$(Q x , h )_{H} = \langle x , Q^* h \rangle_{X\times X^*} \quad \text{ for all } \quad  h \in {H} \textrm{ and } x \in X$$ 
see Theorem 2.2 of \cite{RegFMme} for details. We also obtain that 
$$\ell \in \text{Range}(Q^*) \iff  \sum \frac{1}{\lambda_n}  \left|\langle x_n ,\ell \rangle_{X\times X^*} \right|^2 <\infty$$
from Theorem  2.2 of \cite{RegFMme} which is proven in a similar {\color{black}as was} Picard's Criteria (see for e.g. Theorem 1.28 of \cite{TE-book}).
Then by appealing to the properties of the filter function $\phi(t ; \alpha)$ it can be shown that 
$$\ell \in \text{Range}(Q^*) \iff  \liminf\limits_{\alpha \to 0} \langle x_\alpha \, , Ax_\alpha \rangle_{X\times X^*} < \infty .$$ 
This can be done by using the fact that 
\begin{align}\label{imagfunc}
\langle x_\alpha \, , Ax_\alpha \rangle_{X\times X^*}=\sum \frac{\phi^2(\lambda_n ; \alpha)}{\lambda_n} | \langle x_n,\ell  \rangle_{X\times X^*}|^2
\end{align}
along with some simple estimates of the above quantity using the properties of the filter function. Some common filter functions are given by 
\begin{align}\label{filters}
 \phi(t ; \alpha) =  \frac{t^2}{t^2+\alpha}, \,\, \,  \phi(t ; \alpha) = 1-\left( 1 - \beta t^2\right)^{1/\alpha} \,\, \textrm{ and } \,\,  \displaystyle{  \phi(t ; \alpha)= \left\{\begin{array}{lr} 1, &  t^2\geq \alpha,  \\
 				&  \\
 0,&  t^2 < \alpha
 \end{array} \right.}  
\end{align}
which corresponds to Tikhonov regularization, Landweber iteration (with  $\alpha=1/m$ for some $m \in \N$ and constant $\beta < 1/ \lambda^2_1$) and the Spectral cutoff respectively. It is clear that these filter functions satisfy the above constraints (see for e.g. \cite{IP-book}). 

{\color{black}Note, that the classical factorization method (i.e. without regularization) is given by using \eqref{imagfunc} with $\alpha=0$. Formally, this would imply that $\phi(t ; 0)=1$ and therefore one would be dividing by the singular values $\lambda_n$. This is not numerically stable since $\lambda_n \to 0$ as $n \to \infty.$  This will be seen in one of our numerical examples provided in a later section. }

Now, assume that the operator $A: X \to X^*$ also has the following factorization 
$$A=S^* T S  \quad \text{ where } \quad S: X \to V  \quad \text{ and } \quad T: V \to V^*$$
with $V$ also being a Hilbert space. Here the adjoint operator $S^*: V^* \to X^*$ is given by  
\begin{align}\label{adjoint}
\langle S x , v \rangle_{V\times V^*} = \langle x , S^* v \rangle_{X\times X^*} \quad \text{ for all } \quad  v \in {V} \textrm{ and } x \in X.
\end{align}
Furthermore, we assume $T$ is bounded and strictly coercive on Range$(S)$ i.e. 
$$\beta \| Sx \|^2_V \leq \langle Sx , TSx \rangle_{V\times V^*} \quad \text{ for all } x \in X.$$
If we assume that $S$ is a compact and injective then we have that $A: X \to X^*$ is positive and compact. From the previous discussion, this implies that $A=Q^* Q$ where $Q$ is the `square root' of the operator. Notice, that we have the estimate 
$$ \beta \| Sx \|^2_V  \leq \| Qx \|^2_H = \langle x \, , Ax \rangle_{X\times X^*} = \langle Sx , TSx \rangle_{V\times V^*}  \leq \| T \| _{V\to V^*} \| Sx \|^2_V $$
for all $x \in X$ by appealing to the boundedness and coercivity of the operator $T$. We can conclude, by Theorem 1 of \cite{range-lemma} that Range$\big( Q^* \big)$ = Range$(S^*)$. Putting everything together we have the following result. 

\begin{theorem}\label{reg-range-lemma}
Let $A: X \to X^*$ have the factorization $A=S^* T S$ such that $S: X \to V$ and $T: V \to V^*$ are bounded linear operators where $X$ and $V$ are Hilbert spaces. Assume that $S$ is compact and injective as well as $T$ being strictly coercive on Range$(S)$.  Then we have that
$$\ell \in\text{Range} (S^*) \iff \liminf\limits_{\alpha \to 0} \langle x_\alpha \, , Ax_\alpha \rangle_{X\times X^*} < \infty$$
where $x_\alpha$ is the regularized solution given by \eqref{regsolu} to $Ax=\ell$. 
\end{theorem}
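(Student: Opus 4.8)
The plan is to assemble the ingredients gathered in the paragraphs preceding the statement into a single two-sided estimate and then to read off the equivalence from the spectral formula \eqref{imagfunc}. First I would record the consequences of the hypotheses: because $S$ is compact and injective while $T$ is bounded and strictly coercive on $\text{Range}(S)$, the factorization $A = S^*TS$ makes $A:X\to X^*$ positive and compact, so Theorem 2.2 of \cite{RegFMme} supplies both the square root $A = Q^*Q$ and the Picard-type characterization
$$\ell \in \text{Range}(Q^*) \iff \sum \frac{1}{\lambda_n}\bigl|\langle x_n, \ell\rangle_{X\times X^*}\bigr|^2 < \infty.$$
The two-sided bound $\beta\|Sx\|_V^2 \le \langle x, Ax\rangle_{X\times X^*} \le \|T\|_{V\to V^*}\|Sx\|_V^2$ displayed above, together with Theorem 1 of \cite{range-lemma}, gives $\text{Range}(Q^*) = \text{Range}(S^*)$. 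Hence it suffices to show that summability of the series on the right is equivalent to $\liminf_{\alpha\to 0}\langle x_\alpha, Ax_\alpha\rangle_{X\times X^*} < \infty$.

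For the forward direction I would simply use the uniform bound $\phi(\lambda_n;\alpha)\le C_{\text{reg}}$ in the representation \eqref{imagfunc}, which yields $\langle x_\alpha, Ax_\alpha\rangle_{X\times X^*} \le C_{\text{reg}}^2 \sum \frac{1}{\lambda_n}\bigl|\langle x_n, \ell\rangle_{X\times X^*}\bigr|^2$ for every $\alpha > 0$. If the series is finite this bounds $\langle x_\alpha, Ax_\alpha\rangle_{X\times X^*}$ uniformly in $\alpha$, so the $\liminf$ is finite as well.

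The reverse direction is where the actual work lies and is the step I expect to be the main obstacle, since one may not pass a limit through the infinite sum in \eqref{imagfunc} and only the $\liminf$ of $\langle x_\alpha, Ax_\alpha\rangle_{X\times X^*}$ is assumed finite. To handle this I would truncate. For each fixed $N$, discarding the tail gives
$$\sum_{n=1}^N \frac{\phi^2(\lambda_n;\alpha)}{\lambda_n}\bigl|\langle x_n, \ell\rangle_{X\times X^*}\bigr|^2 \le \langle x_\alpha, Ax_\alpha\rangle_{X\times X^*}.$$
Taking the $\liminf$ as $\alpha\to 0$ of the finite left-hand sum and invoking $\phi(\lambda_n;\alpha)\to 1$ termwise produces $\sum_{n=1}^N \frac{1}{\lambda_n}\bigl|\langle x_n, \ell\rangle_{X\times X^*}\bigr|^2 \le \liminf_{\alpha\to 0}\langle x_\alpha, Ax_\alpha\rangle_{X\times X^*}$. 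Because the right-hand bound is independent of $N$ and the partial sums increase in $N$, letting $N\to\infty$ forces the full series to converge, whence $\ell\in\text{Range}(Q^*)=\text{Range}(S^*)$. The delicate point throughout is that truncating to a finite sum before sending $N\to\infty$ is exactly what legitimizes interchanging the $\liminf$ with summation, and this is the only place where the hypothesis that $\phi$ converges pointwise to $1$ is genuinely used.
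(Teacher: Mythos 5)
Your proposal is correct and takes essentially the same route as the paper, which defers the details to \cite{RegFMme} but sketches precisely your argument in the surrounding text: positivity and compactness of $A=S^*TS$, the square-root factorization $A=Q^*Q$ with the Picard-type criterion for $\mathrm{Range}(Q^*)$, the identity $\mathrm{Range}(Q^*)=\mathrm{Range}(S^*)$ via the two-sided coercivity estimate and Theorem 1 of \cite{range-lemma}, and then ``simple estimates'' of \eqref{imagfunc} using the filter properties. Your two estimates — the uniform bound $\phi\le C_{\mathrm{reg}}$ for the forward direction and the truncation-then-limit argument for the reverse direction — are exactly the standard way to make those omitted estimates rigorous, and both steps are sound.
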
 
\begin{proof}
For details of the proof see \cite{RegFMme}. 
\end{proof}

Notice, that the result in Theorem \ref{reg-range-lemma} can be reformulated using the spectral decomposition of $A$ such that 
\begin{align}\label{rangefunc}
\ell \in\text{Range} (S^*) \iff \liminf\limits_{\alpha \to 0} \sum \frac{\phi^2(\lambda_n ; \alpha)}{\lambda_n} | ( \ell_n,\ell  )_{X^*}|^2< \infty
\end{align}
where we have used \eqref{imagfunc}. Here $\phi(t ; \alpha)$ again denotes the filter function used to find the regularized solution to $Ax=\ell$. From this we have related the range of $S^*$ to the spectral decomposition of $A$  just as in the traditional factorization method but we have a regularization step. This allows one to have a rigorous range characterization without having unstable numerical reconstructions due to the fact that $\lambda_n$ tend to zero rapidly. In the preceding section we will see how to apply Theorem \ref{reg-range-lemma} to inverse shape problems coming from inverse scattering. We note that this method has been used (without proof) for numerical examples in \cite{fm-gbc,Liem} for recovering scatterers with the classical factorization method. 

\section{Applications to Inverse Scattering}\label{InvScat}
In this section, we will see how the theory developed in section \ref{RFM} can be applied to solving inverse shape problems. Here we are interested in problems coming from the area of inverse scattering. This comes up in many areas of engineering and medical imaging. The goal is to recover the shape of an object using the measured scattering data with little to no a priori information about the object. The scatterer will be illuminated by an incident wave and we will show how to recover the scatterer from the measured scattering data using the regularized factorization method.  

\subsection{An example with far-field measurements}\label{ff}
We will now consider the inverse shape problem of reconstructing an isotropic scatterer using far-field measurements. This problem has been studied by {\color{black}many} researchers with many interesting reconstruction methods for e.g. \cite{BaoLi,GLSM,FMiso}. Here we will apply Theorem \ref{reg-range-lemma} to solve the inverse shape problem as well as {\color{black}provide} some numerical examples. The classical factorization method was studied for this problem in \cite{FMiso}. 

To begin, let $D \subset \R^m$ (for $m=2$ or 3) denote the unknown inhomogeneous isotropic scattering region with Lipschitz boundary. Here we assume that the incident plane wave given by $u^i(x)=\text{e}^{\text{i}kx\cdot d}$ is used to illuminate the scatterer where $d \in \mathbb{S}=\text{unit sphere/circle}$. The parameter $k>0$ denotes that wave number. The incident plane wave's interaction with the scatterer $D$ results in the radiating scattered field $u^s(x,d)$ that satisfies 
\begin{align}
\Delta u^s+k^2(1+q)u^s &= -k^2 q u^i  \, \,  \textrm{ in } \, \, \R^m \label{scatter1} \\
{\partial _r u^s} -\text{i}k u^s &=\mathcal{O}\big( { r^{-(m+1)/2} } \big) \, \,  \textrm{ as } \, \, r \rightarrow \infty.\label{src}
\end{align}
Here \eqref{src} is the Sommerfeld radiation condition and is assumed to hold uniformly with respect to the angular direction(s) with $r=|x|$. The contrast $q \in L^{\infty}(\R^m)$ defines the deviation in the refractive index from the background. Therefore, we let $n(x)=1+q(x)$ denote the refractive index which is the material parameter with the presence of the scatterer such that supp$(q)=D$.

Assuming that there is a constant $q_{\text{min}}$ where 
$$ \Re (q)\geq q_{\text{min}}>0 \quad \text{ and } \quad \Im (q) \geq 0 \quad \text{ for a.e. } \,\, x \in D$$
then the analysis in chapter 8 of \cite{Colto2013} implies that \eqref{scatter1}--\eqref{src} has a unique solution $u^s \in H^1_{loc}(\R^m)$. Since $u^s$ is a radiating solution to Helmholtz equation in $\R^m \setminus \overline{D}$ we have the expansion
$$u^s(x,d)= \gamma \frac{\text{e}^{\text{i}kr}}{r^{(m-1)/2}} \left\{ u^{\infty}(\hat{x}, d ) + \mathcal{O} \left( \frac{1}{r} \right) \right\} \; \textrm{  as  } \;  r \to \infty$$
where $u^{\infty}(\hat{x}, d )$ denotes the corresponding {\it far-field pattern} for \eqref{scatter1}--\eqref{src}. This quantity depending on the incident direction $d$ and the measurement direction $\hat x=x/r$. The constant 
$$\gamma = \frac{ \mathrm{e}^{\mathrm{i}\pi/4} }{ \sqrt{8 \pi k} } \,\,\, \text{in} \,\,\, \R^2 \quad \text{and} \quad  \gamma = \frac{1}{ 4\pi } \,\,\, \text{in} \,\,\, \R^3. $$
For this model we will assume that the far-field pattern is measured from the scattered field far away from the scatterer $D$. This implies that we have access to the measured far-field operator  
\begin{align}
F:L^2(\mathbb{S}) \longrightarrow  L^2(\mathbb{S}) \quad \text{ such that } \quad (F g)(\hat{x})=\int_{\mathbb{S}} u^{\infty}(\hat{x}, d ) g(d) \, \text{d}s(d) . \label{fo}
\end{align}
In order to solve the inverse shape problem of recovering $D$ from the knowledge of $F$ we will appeal to Theorem \ref{reg-range-lemma} along with the factorization analysis in \cite{FMiso}.

We now derive and use the factorization the far-field operator $F$ to solve the inverse problem. For this, motivated by \eqref{scatter1}--\eqref{src} we consider the problem 
$$ \Delta w+k^2(1+q)w = -k^2 q f  \quad  \textrm{ in } \quad \R^m$$ 
along with \eqref{src} for any $f \in L^2(D)$. 
Therefore, we note that $w  \in H^1_{loc}(\R^m)$ satisfies (see for e.g. \cite{Colto2013})
\begin{align}
w(x) = k^2 \int_D q(y)\Phi(x,y) \big[ w(y) + f(y) \big] \, \text{d}y  \quad \text{ for any } \quad x \in \R^m.\label{intequ1}
\end{align}
Here $\Phi$ denotes the fundamental solution for Helmholtz equation given by 
\begin{align}\label{fund-solu}
\Phi(\cdot \,, y) = \left\{ 
		\begin{array}{cl}
			\frac{\text{i}}{4} H^{(1)}_{0}(k|\cdot - y|) &\quad \text{for} \quad m = 2, \\[1.5ex]
			\displaystyle \quad\quad  \frac{\text{e}^{\text{i}k|\cdot - y|}}{4\pi|\cdot - y|} &\quad \text{for} \quad m = 3
		\end{array}
	\right.
\end{align}
where $H^{(1)}_{0}$ is the first kind Hankel function of order zero. Using the fact that 
$$\Phi(x,y) =  \gamma \frac{\text{e}^{\text{i}k|x|}}{|x|^{(m-1)/2}} \left\{ \text{e}^{-\text{i} k \hat{x} \cdot y }+ \mathcal{O} \left( \frac{1}{|x|} \right) \right\} \; \textrm{  as  } \;  |x| \to \infty$$
from \eqref{intequ1} we can conclude that the far-field pattern for $w$ is given by  
\begin{align}
w^\infty (\hat{x} )  = k^2 \int_D q(y) \text{e}^{-\text{i} k \hat{x} \cdot y } \big[ w(y) + f(y) \big] \, \text{d}y. \label{ff-pattern}
\end{align}
From this, we define the operator 
$$H: L^2( \mathbb{S}) \longrightarrow L^2(D) \quad \text{ such that } \quad  H g = \int_{\mathbb{S}} \text{e}^{\text{i}k  y \cdot d} g(d)  \mathrm{d} s(d)\Big|_{ D}$$
and {\color{black}its} adjoint 
$$ H^*: L^2( D) \longrightarrow L^2(\mathbb{S}) \quad \text{ such that } \quad H^* \varphi=  \int_{D} \text{e}^{-\text{i}k \hat{x}\cdot y} \varphi(y)  \mathrm{d}{y}$$
for any $g \in L^2(\mathbb{S})$ and $\varphi \in L^2(D)$. Lastly, we define the bounded linear operator $T: L^2(D) \to L^2(D)$ such that 
$$T f = k^2q[w +f] \big|_D\quad \text{ for any } \quad f \in L^2(D).$$
It is well-known that $F$ corresponds to the far-field pattern when $u^i$ is replaced by $Hg$. The representation \eqref{ff-pattern} implies that 
$$ Fg = k^2\int_D q(y)  \text{e}^{-\text{i} k \hat{x} \cdot y } \big[ w_g(y) + (Hg)(y) \big] \, \text{d}y$$
where $w_g$ is the scattered field for $f=Hg$. We now have the factorization 
$$F=H^* TH$$ 
where the operators $T$ and $H$ are as defined above.

Notice, that we have a symmetric  factorization of the far-field operator that is needed to apply the theory in section \ref{RFM}. We now need that the operators used in the factorization do indeed satisfy the assumptions of Theorem \ref{reg-range-lemma}. To this end, it is well known that $H$ is  is compact and injective as well as 
$$ \ell_z = \text{e}^{-\text{i}k \hat{x}\cdot z} \quad \text{ satisfies } \quad \ell_z \in \text{Range}(H^*) \iff z \in D.$$
The last piece of the puzzle is the coercivity of the middle operator. As it stands, the middle operator $T$ is not strictly coercive on the range of $H$. In order to solve this problem we consider the operator 
$$ F_{\sharp} = \big|  \Re(F) \big| +  \big| \Im (F) \big|.$$
where 
$$\Re(F) =\frac{1}{2} (F+F^*) \quad  \text{and} \quad \Im (F) = \frac{1}{2 \text{i}} (F-F^*).$$
Note, that $\Re(F)$ and $\Im (F)$ are self-adjoint compact operators by definition which implies that the absolute value can be compute via the spectral decomposition i.e. the Hilbert-Schmidt Theorem. From the analysis of the operator $T$ (see for e.g. chapter 4 \cite{kirschbook} for details) we have that 
$$F_{\sharp} = H^* T_{\sharp} H$$ 
where the new operator $T_{\sharp}$ is strictly coercive on $ L^2(D)$. Therefore, by appealing to Theorem \ref{reg-range-lemma} we have that 
\begin{align}\label{ff-solu}
z \in D \iff \liminf\limits_{\alpha \to 0} (g^{\alpha}_z , F_{\sharp} g^{\alpha}_z )_{L^2(\mathbb{S})} < \infty 
\end{align}
provided that $g^\alpha_z$ is the regularized solution to $F_{\sharp} g = \ell_z$. 

{\bf Numerical examples:} We now give some numerical reconstructions using \eqref{ff-solu} in two dimensions. To this end, we assume that the scatterer has small area. Then, we can exploit the Born approximation for the scattered field to simplify the calculations of the synthetic data. Therefore, we will compute the synthetic far-field data using the approximation 
$$u^{\infty}(\hat{x},d) \approx k^2 \int_{D}q(y) \mathrm{e}^{- \mathrm{i} k {y}  \cdot ( \hat{x} - d)} \, \mathrm{d}y.$$
In all of the preceding examples we will take a constant contrast in the scatterer as well as a fixed wave number given by $q=1+\text{i}$ and  $k=4$, respectively. 
We let the boundary of the scatterer to be given by  
$$\partial D = r(\theta) \left(\cos (\theta), \sin(\theta) \right) \quad \text{ for } \quad 0\leq \theta \leq 2 \pi. $$
Here the radial function $r(\theta)$ is given by either 
$$r(\theta) = 0.5 \left( |\sin(\theta)|^{10} +  0.1 |\cos(\theta)|^{10} \right)^{-1/10} \quad  \text{or} \quad r(\theta) = 0.5\big(1-0.25\sin(4\theta) \big)$$
for a rounded square shaped scatterer or star shaped scatterer, respectively. 

In Figure \ref{ff-square-recon}--\ref{ff-star-landweber-recon}, we plot the discretized version of the reciprocal to \eqref{ff-solu} in order to recover the scatterer. For this, we will  take a fixed regularization parameter $\alpha =10^{-6}$ in all our examples. 
Now, we need to define the discretized far-field operator with random noise added as 
$${\bf F}_{\delta} = \left[u^{\infty}(\hat{x}_i, d_j ) \left( 1 +\delta E_{i,j} \right) \right]_{i,j=1}^{64}$$
with random complex-valued matrix $\mathbf{E}$ satisfying $\| \mathbf{E} \|_2 =1$. 
Here, we take $\hat{x}_i, d_j$ to be equally spaced {\color{black}points} on the unit circle given by 
$$\hat{x}_i = d_i =(\cos \theta_i , \sin \theta_i) \quad  \text{with}\quad  \theta_i =2\pi(i-1)/64. $$
Therefore, following \cite{RegFMme} we have that the imaging functional that discretizes the reciprocal to \eqref{ff-solu} is given by (see also \eqref{imagfunc})
$$W(z)= \left[ \sum\limits_{j=1}^{64} \frac{\phi^2(\sigma_j  ; \alpha)}{\sigma_j} \big|({\bf u}_j , \boldsymbol{ \ell}_z)\big|^2 \right]^{-1} \,\, \text{ with } \,\,  \boldsymbol{\ell}_z = [\text{e}^{-\text{i}k \hat{x}_i \cdot z}]_{i=1}^{64}.$$
Here $\sigma_j$ are the singular values and ${\bf u}_j$ are the left singular vectors of 
$${\bf F}_{\delta , \sharp} = \big|  \Re({\bf F}_{\delta}) \big| +  \big| \Im ({\bf F}_{\delta}) \big|$$ 
and the filter function $\phi(t ; \alpha)$ is given by \eqref{filters}. To reiterate, the absolute value of a self-adjoint matrix is given by {\color{black}its} eigenvalue decomposition. 

By Theorem \ref{reg-range-lemma} and \eqref{ff-solu} we expect that $W(z)>0$ for $z \in D$ and $W(z)\approx 0$ for $z \notin D$. In the following examples for this section, we plot the imaging function $W(z)$ along with the true shape of the scatterer given by the dotted lines. 

\begin{figure}[tb]
\centering 
\includegraphics[scale=0.4]{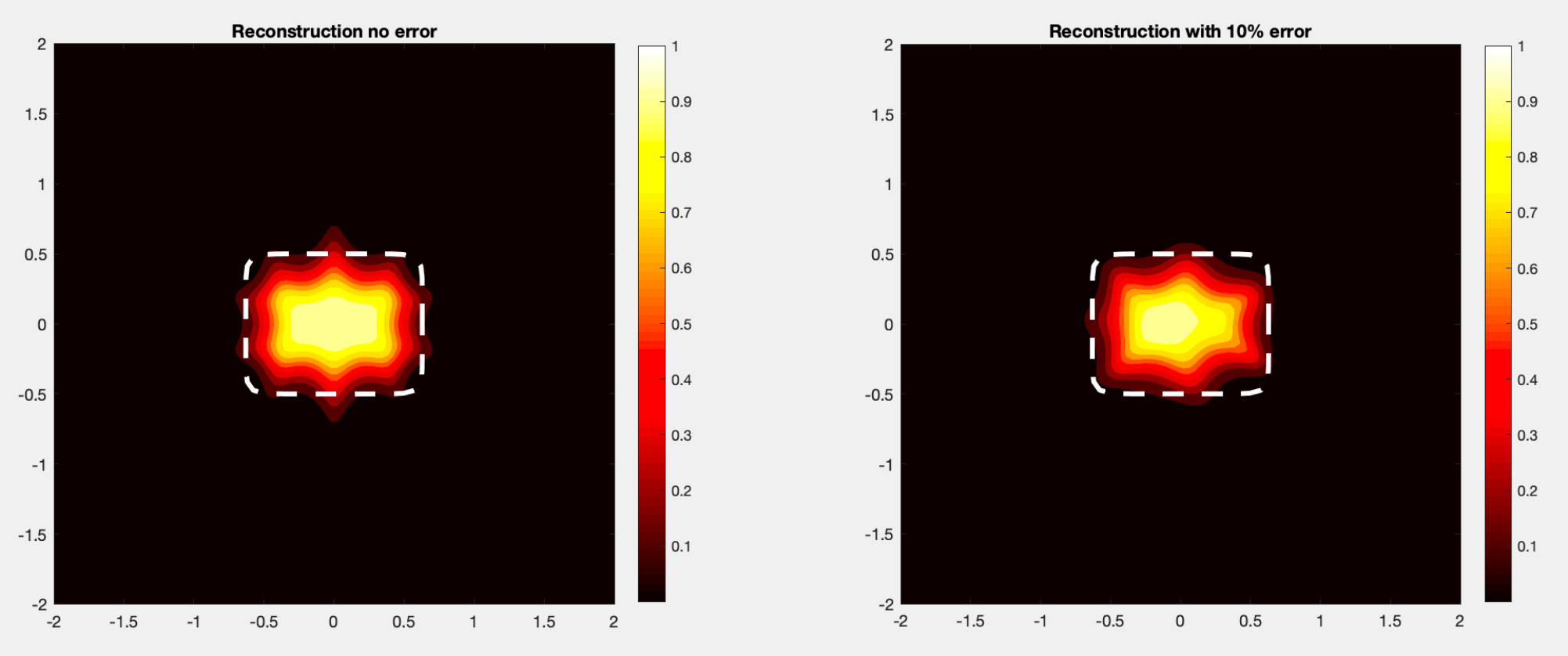}
\caption{Reconstruction of the rounded square shaped scatterer with the Tikhonov filter given in \eqref{filters}. Left: reconstruction with no added noise and Right: reconstruction with 10$\%$ added noise.}
\label{ff-square-recon}
\end{figure}

We will also check the influence of the filter function on the numerical reconstruction. The numerical examples in \cite{RegFMme} {\color{black}seem} to suggest that the reconstruction does not depend heavily on the regularization scheme used. We test that here where we present the {\color{black}reconstructed} star shaped scatterer with multiple filter functions. 

\begin{figure}[tb]
\centering 
\includegraphics[scale=0.4]{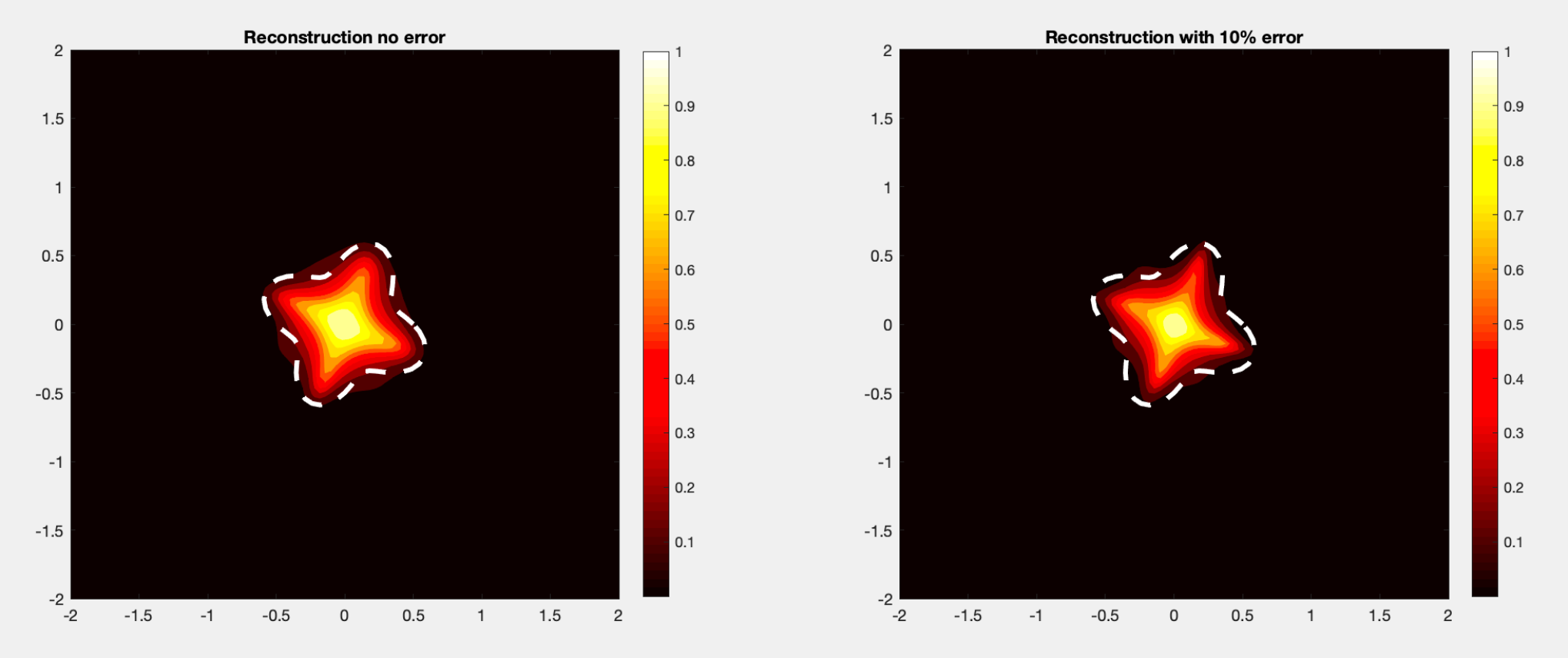}
\caption{Reconstruction of the star shaped scatterer with the Tikhonov filter given in \eqref{filters}. Left: reconstruction with no added noise and Right: reconstruction with 10$\%$ added noise.}
\label{ff-star-tik-recon}
\end{figure}

\begin{figure}[tb]
\centering 
\includegraphics[scale=0.4]{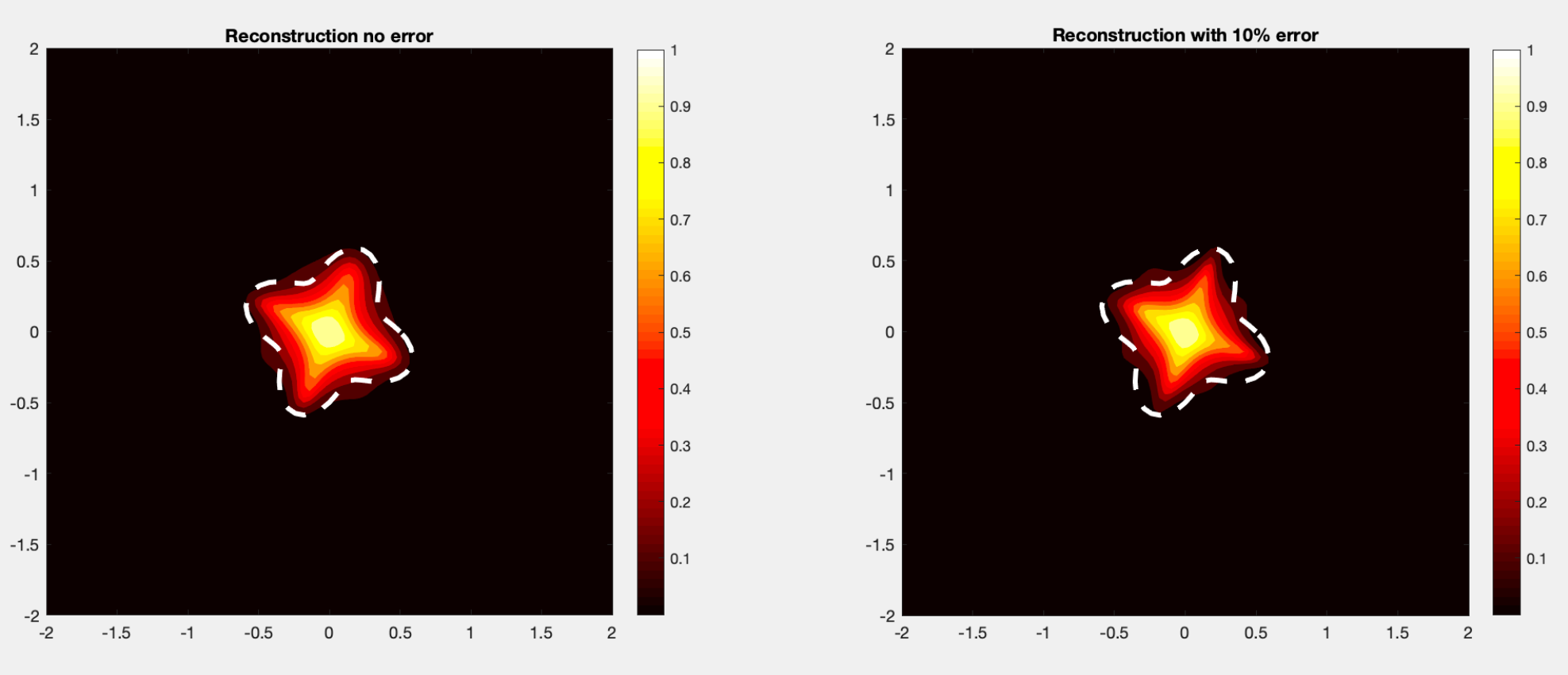}
\caption{Reconstruction of the star shaped scatterer with the Spectral cutoff filter given in \eqref{filters}. Left: reconstruction with no added noise and Right: reconstruction with 10$\%$ added noise.}
\label{ff-star-TSVD-recon}
\end{figure}

\begin{figure}[tb]
\centering 
\includegraphics[scale=0.4]{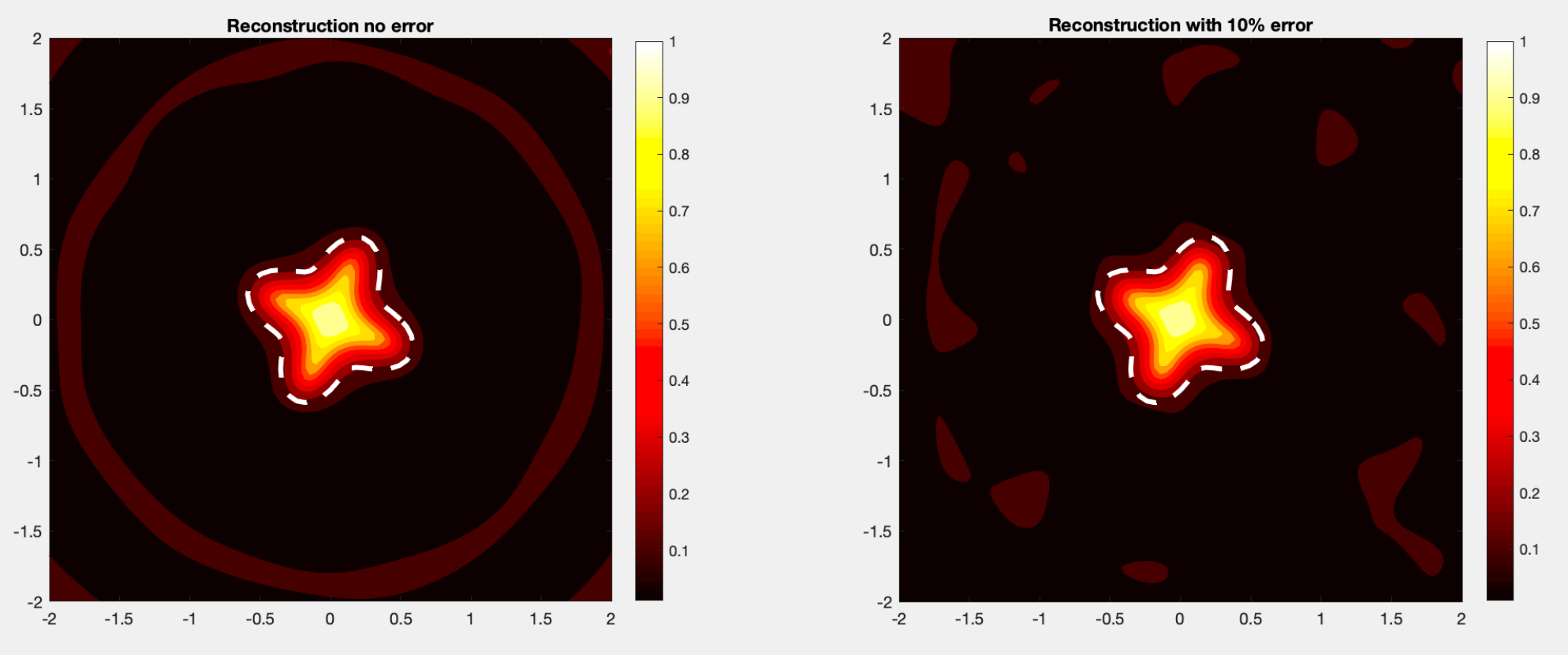}
\caption{Reconstruction of the star shaped scatterer with the Landweber filter given in \eqref{filters}. Left: reconstruction with no added noise and Right: reconstruction with 10$\%$ added noise.}
\label{ff-star-landweber-recon}
\end{figure}

In Figure \ref{ff-star-tik-recon}--\ref{ff-star-landweber-recon}, we reconstruct the star shaped scatterer with the three filter functions given in \eqref{filters}. As we can see, the choice of filter function {\color{black}seems} to cause little to no differences in the numerical reconstruction. 

{\color{black}Lastly, we wish to show that the regularization step is needed to insure stable reconstructions. In Figure \ref{ff-star-regcompair1}--\ref{ff-star-regcompair2}, we again recover the the star shaped scatterer. Here we plot the imaging functional $W(z)$ without regularization (i.e. $\alpha=0$ corresponding to the classical factorization method) and with regularization (i.e. $\alpha = 10^{-3}$). We give the reconstruction when no error is added to the data and 10$\%$ random noise is added to the data. As we can see, the case without regularization fails to recover the scatterer when noise is added to the far-field data. }
\begin{figure}[tb]
\centering 
\includegraphics[scale=0.4]{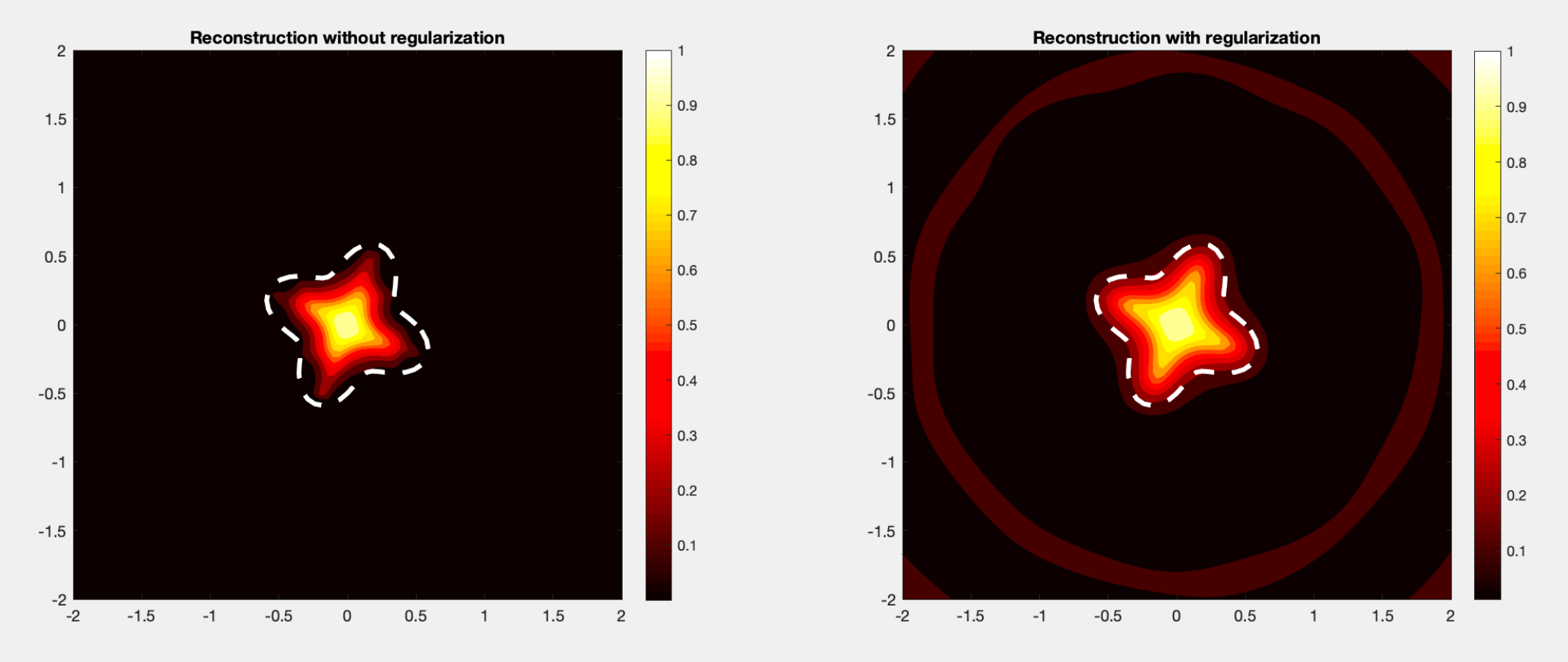}
\caption{Reconstruction of the star shaped scatterer with the Tikhonov filter given in \eqref{filters}. Left: reconstruction without regularization and Right: reconstruction with regularization. Here no error is added to the far-field data.}
\label{ff-star-regcompair1}
\end{figure}
 
\begin{figure}[tb]
\centering 
\includegraphics[scale=0.4]{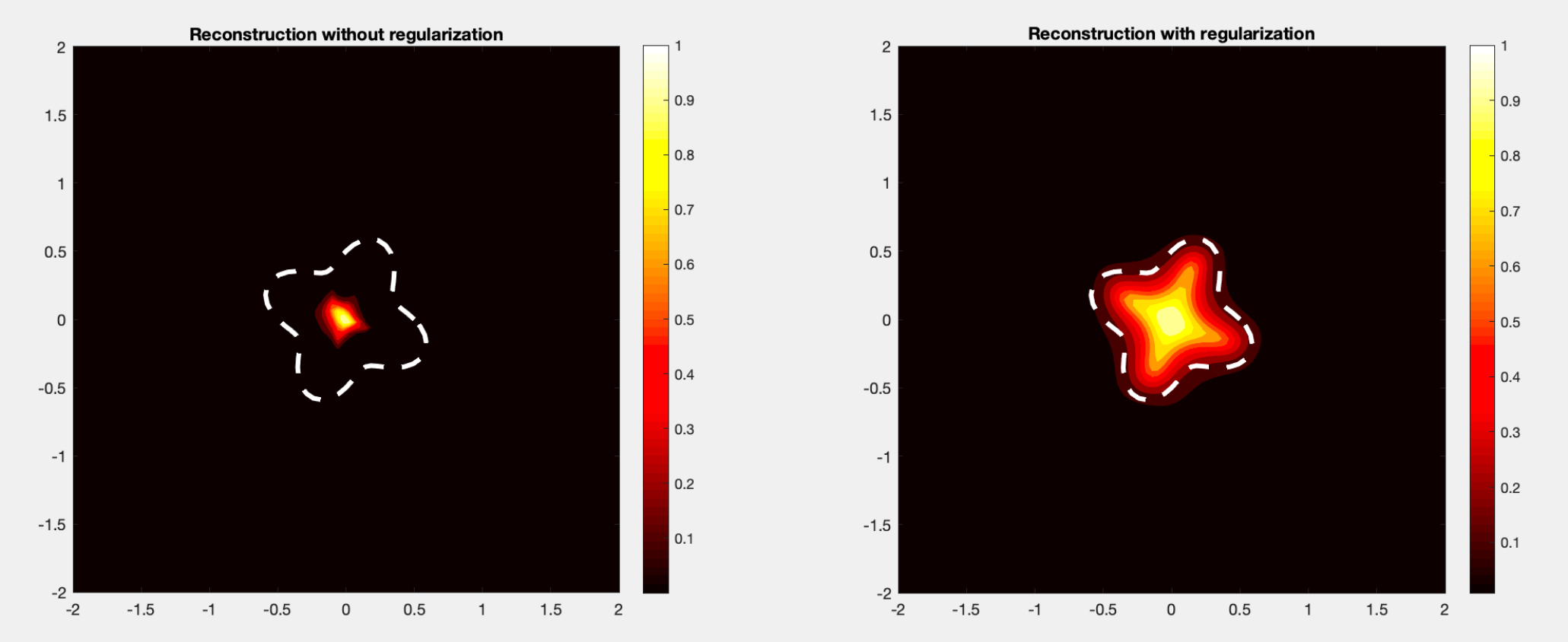}
\caption{Reconstruction of the star shaped scatterer with the Tikhonov filter given in \eqref{filters}. Left: reconstruction without regularization and Right: reconstruction with regularization. Here 10$\%$ error is added to the far-field data.}
\label{ff-star-regcompair2}
\end{figure}

\subsection{An example with near-field measurements}\label{nf}
We will now consider the inverse shape problem of reconstructing a sound soft scatterer using near-field measurements. One of the main difficulties when using a factorization method with near-field measurements is the fact that the near-field operator does not have a symmetric factorization as is needed to apply Theorem \ref{reg-range-lemma}. Due to this, researchers have developed analytical tools for post-processing the near-field measurements to give the corresponding operator a symmetric factorization. One way to achieve this is by using the Outgoing-to-Incoming operator. This was done in \cite{nf-fm-isotropic} for the classical factorization method using near-field measurements. In \cite{Harris-Rome} non-physical sources where used to insure that the near-field operator admits a symmetric factorization. Recently, in \cite{DSMnfme} a Dirichlet-to-Far-Field operator was used to convert the near-field measurements into far-field measurements for a direct sampling inversion method. This is advantageous due to the fact that the far-field operator usually admits a symmetric factorization as we have seen in the previous section.

Now, we will formulate the inverse scattering problem under consideration and then apply the Dirichlet-to-Far-Field operator to the measurements in order to apply Theorem \ref{reg-range-lemma}. To this end, we will assume that the unknown scatterer is denoted by $D \subset \R^m$ where $\partial D \in \mathcal{C}^2$ is a closed curve/surface such that $\R^m \setminus\overline{D}$ is connected. Again, we let $k>0$ denote the associated wave number. The scatterer is illuminated by a point source incident field $u^i( \cdot \, , y)=\Phi( \cdot \, , y)$ given by \eqref{fund-solu} where $y$ is the location of the point source on the curves/surface $\Gamma$. We will assume that the scatterer $D$ is contained in the region inclosed by the $\Gamma$ such that $\text{dist}(\Gamma , D)>0$. Therefore, the radiating scattered field $u^s( \cdot \, , y) \in H^1_{\text{loc}} (\R^m \setminus\overline{D})$ satisfies 
\begin{align}
\Delta u^s +k^2  u^s=0\,\,  \textrm{ in } \,\, \R^m \setminus\overline{D} \quad \textrm{ and } \quad u^s( \cdot \, , y) = -u^i( \cdot \, , y) \,\, \textrm{ on } \,\, \partial D \label{scalarprob} 
\end{align} 
 along with the radiation condition \eqref{src}. It is well known that for every $y \in \Gamma$ there is a unique scattered field. So we may assume that the scattered field $u^s(x,y)$ is known/measured for all $x,y \in \Gamma$. Therefore, we now define the so-called near-field operator 
$${N} : L^2(\Gamma) \longrightarrow L^2(\Gamma) \quad  \text{given by} \quad  ({N}g)(x) = \int_{\Gamma} u^s(x,y) g(y) \, \text{d}s(y).$$ 
Here in inverse shape problem is to recover $D$ from the knowledge of the near-field operator $N$. 

To this end, just as in the previous example we begin by deriving a factorization for the near-field operator. In order to continue, we make the assumption that $k^2$ is not a Dirichlet eigenvalue of the negative Laplacian in $D$. For this, motivated by \eqref{scalarprob} we consider the problem 
$$ \Delta w +k^2  w=0\,\,  \textrm{ in } \,\, \R^m \setminus\overline{D} \quad \textrm{ and } \quad w = -f \,\, \textrm{ on } \,\, \partial D $$ 
along with \eqref{src} for any $f \in H^{1/2}(\partial D)$. From the analysis done in \cite{DSMnfme} we have that the solution $w \in H^1_{\text{loc}} (\R^m \setminus\overline{D})$ has the integral representation
\begin{align}
w( x ) = -  \int_{\partial D} \Phi( x  ,  \omega) \big[ S^{-1} f \big](\omega) \, \text{d}s(\omega)  \quad \text{ for any } \quad x \in \R^m \setminus\overline{D}  \label{us-rep}
\end{align}
where 
$$S:H^{-1/2}(\partial D) \longrightarrow H^{1/2}(\partial D) \quad \text{such that} \quad S \varphi =\int_{\partial D} \Phi( \cdot \, ,  \omega) \varphi (\omega) \, \text{d}s(\omega) \Big|_{\partial D}$$
for any $\varphi \in H^{-1/2}(\partial D)$. By the assumption on $k$ it is known that $S$ has a bounded inverse (chapter 1 in \cite{kirschbook}) which implies that \eqref{us-rep} is well defined.  Now, we define the bounded linear operator 
\begin{align}\label{sl-op1}
M : L^2(\Gamma)  \longrightarrow L^2(\partial D) \quad  \text{given by} \quad   M g = \int_{\Gamma}  \Phi( \cdot \, , y)  g(y) \, \text{d}s(y) \Big|_{\partial D} 
\end{align}
and the dual-operator 
\begin{align}\label{sl-op2}
M^{\top} : L^2(\partial D)  \longrightarrow L^2(\Gamma) \quad  \text{given by} \quad M^{\top} \varphi = \int_{\partial D}  \Phi( \omega , \cdot)  \varphi(\omega) \, \text{d}s(\omega) \Big|_{\Gamma}.
\end{align}
Notice, that the dual-operator is with respect to the {\it bilinear} $L^2$ dual-product $\langle  \cdot \, , \cdot  \rangle_{L^2}$ such that 
$$\langle  \varphi , Mg  \rangle_{L^2(\partial D)} =\langle  M^{\top} \varphi , g  \rangle_{L^2(\Gamma)} \quad \text{for all } \quad g\in L^2(\Gamma) \textrm{ and } \varphi \in L^2(\partial D).$$
For the mapping properties of the operators $M$ and $M^\top$ see chapter 6 in \cite{mclean}. It is well-known that the near-field operator is the trace on $\Gamma$ for the solution to \eqref{scalarprob} provided that the incident field $u^i( \cdot \, , y)$ is replaced by $M g$.  Therefore, by equation \eqref{us-rep} we have that 
$$Ng = -  \int_{\partial D} \Phi( x  ,  \omega) \big[ S^{-1} Mg \big](\omega) \, \text{d}s(\omega)\Big|_{\Gamma} \quad \text{for all } \quad g \in L^2(\Gamma).$$ 
By the definition of the operator $M$ and {\color{black}its}  dual-operator we can conclude that 
\begin{align}
N = - M^{\top} \, S^{-1} \, M. \label{N-fac}
\end{align}   
Notice, that \eqref{N-fac} is not a symmetric factorization as in Theorem \ref{reg-range-lemma} due to the transpose rather than the adjoint. This implies that we must continue our analysis of the near-field operator in order to continue. 

We could employ the so-called Outgoing-to-Incoming operator as in \cite{nf-fm-isotropic}. From this the classical factorization method was studied in in \cite{nf-fm-isotropic} for three types of scatterers using near-field measurements. More recently, in \cite{DSMnfme} it has been shown that the near-field data can be transformed into the far-field data for the corresponding problem. We will now, use {\color{black}the} analysis in \cite{DSMnfme} to augment the measurements to use the regularized factorization method for this problem. 

We now define the Dirichlet-to-Far-Field operator which is a main component of the analysis. Now, let 
$v\in H^1_{\text{loc}}(\R^m \setminus \overline{\text{Int}(\Gamma)})$ be the unique solution to
\begin{align}
\Delta v +k^2 v  =  0 \quad \text{in}  \quad \R^m \setminus \overline{\text{Int}(\Gamma)} \quad \text{ with } \quad v|_\Gamma = f \label{eq-ext1} 
\end{align} 
along with the radiation condition \eqref{src} for any $ f \in H^{1/2}(\Gamma)$. Then, just as in \cite{DSMnf} we can define Dirichlet-to-Far-Field operator given by  
\begin{align}
\mathcal{Q}: H^{1/2}(\Gamma) \longrightarrow L^2(\mathbb{S}) \quad \text{such that } \quad (\mathcal{Q} f)(\hat{x}) = v^{\infty}(\hat{x}), \quad \forall \; \hat{x} \in \mathbb{S}. \label{Q-operator}
\end{align}
Now, we have that 
\begin{align}
(\mathcal{Q} \, M^{\top} \varphi) (\hat{x}) = \int_{\partial D} \text{e}^{-\text{i}k \hat{x}\cdot \omega} \varphi(\omega)   \text{d}s(\omega)  \quad  \textrm{for any } \quad \varphi \in L^2(\partial D) \label{qm-relation}
\end{align} 
by the asymptotic relations for the fundamental solution as $|x| \to \infty$.  Therefore, by \eqref{qm-relation} we define the bounded linear operator 
$$H: L^2( \mathbb{S}) \longrightarrow L^2( \partial D) \quad  \textrm{given by} \quad (H g)(\omega) = \int_{\mathbb{S}} \text{e}^{\text{i}k  \omega \cdot \hat{x}} g(\hat{x}) \mathrm{d} s(\hat{x}) \Big|_{\partial D} $$
for any $ g \in L^2(\mathbb{S})$. This corresponds to the trace of Herglotz wave function on the boundary of the scatterer. From this, we see that $\mathcal{Q} \, M^{\top} = H^*$. Now {\color{black}take} the transpose the expression to obtain that $(H^*)^\top = M \mathcal{Q}^\top$. We can now show that 
\begin{align*}
\left( (H^*)^\top g \right) (\omega) &= \int_{\mathbb{S}} \text{e}^{- \text{i}k  \omega \cdot \hat{x}} g(\hat{x}) \mathrm{d} s(\hat{x}) \Big|_{\partial D}\\
						   &= \int_{\mathbb{S}} \text{e}^{\text{i}k  \omega \cdot \hat{x}} g(-\hat{x}) \mathrm{d} s(\hat{x}) \Big|_{\partial D}\\
						   &=(H\mathcal{R} g)(\omega)
\end{align*}
where the operator 
$$\mathcal{R}:  L^2( \mathbb{S}) \longrightarrow L^2(\mathbb{S}) \quad \text{ is given by } \quad (\mathcal{R}g)(\hat{x}) = g(-\hat{x}).$$ 
Clearly, $\mathcal{R}$ is a bounded linear operator with $\mathcal{R} = \mathcal{R}^{-1}$. From this we can conclude that $H=M \mathcal{Q}^\top \mathcal{R}$. By the definition of $\mathcal{Q}$ and $\mathcal{R}$ we obtain that   
\begin{align} \label{FFT}
\mathcal{Q} N \mathcal{Q}^\top \mathcal{R} =- H^*S^{-1}H \quad \text{ where } \quad \mathcal{Q} N \mathcal{Q}^\top \mathcal{R} :  L^2( \mathbb{S}) \longrightarrow L^2(\mathbb{S})
\end{align}
by appealing to the factorization in \eqref{N-fac}. 

We can now relate the transformed operator $\mathcal{Q} N \mathcal{Q}^\top \mathcal{R}$ to the far-field operator for the scattering problem \eqref{scalarprob} where the incident field is given by a plane wave. Indeed, by \eqref{FFT} and equation (1.55) in \cite{kirschbook} we have that $\mathcal{Q} N \mathcal{Q}^\top \mathcal{R}=F$ where $F$ is the corresponding  far-field operator. This is important for our analysis here since $F$ does have a symmetric factorization. From Theorem 1.15 in \cite{kirschbook} we have that 
$$F=GS^*G \quad \text{ which implies that } \quad \mathcal{Q} N \mathcal{Q}^\top \mathcal{R} = GS^*G$$ 
where $S^*$ is the adjoint of $S$ defined above. The operator $G$ maps the trace on $\partial D$ to the far-field pattern for $w$ where 
$$ G: H^{1/2}( \partial D) \longrightarrow L^2(\mathbb{S}) \quad  \textrm{ is given by} \quad G w\big|_{\partial D} = w^\infty.$$ 
Recall, $w \in H^1_{\text{loc}} (\R^m \setminus\overline{D})$ can be written using \eqref{us-rep} and satisfies \eqref{src}. Just as in the previous section we consider 
$$ \big(\mathcal{Q} N \mathcal{Q}^\top \mathcal{R}\big)_{\sharp} = \big|  \Re\big(\mathcal{Q} N \mathcal{Q}^\top \mathcal{R}\big) \big| +  \big| \Im \big(\mathcal{Q} N \mathcal{Q}^\top \mathcal{R}\big) \big|.$$
By appealing to Lemma 1.14 of \cite{kirschbook} we again can conclude that we have the factorization 
$$  \big(\mathcal{Q} N \mathcal{Q}^\top \mathcal{R}\big)_{\sharp} = GS^*_\sharp G^*$$
where $S^*_\sharp$ is a strictly coercive operator. The last piece we need to complete the puzzle is the fact  that $G^*$ is compact and injective (see Theorem 1.15 of \cite{kirschbook}) along with 
$$ \ell_z = \text{e}^{-\text{i}k \hat{x}\cdot z} \quad \text{ satisfies } \quad \ell_z \in \text{Range}(G) \iff z \in D.$$
Therefore, by appealing to Theorem \ref{reg-range-lemma} we have that 
\begin{align}\label{nf-solu}
z \in D \iff \liminf\limits_{\alpha \to 0} \left(g^{\alpha}_z , \big(\mathcal{Q} N \mathcal{Q}^\top \mathcal{R}\big)_{\sharp} g^{\alpha}_z \right)_{L^2(\mathbb{S})} < \infty 
\end{align}
provided that $g^\alpha_z$ is the regularized solution to $\big(\mathcal{Q} N \mathcal{Q}^\top \mathcal{R}\big)_{\sharp} g = \ell_z$.

In order to apply \eqref{nf-solu} we need to compute the operators $\mathcal{Q}$ and $\mathcal{R}$. To due so, assume that $\Gamma = \partial B(0;\rho)$ for fixed $\rho>0$ in two dimensions then by {\color{black}appealing} to separation of variables and the asymptotic expansions for Hankel functions to obtain a formula for $\mathcal{Q}$. 
Indeed, we can use the fact that 
$$v(r,\theta) = \sum_{|n| = 0}^{\infty} \frac{{f}_n}{H^{(1)}_{n}(k \rho)}  H^{(1)}_{n}(kr) \text{e}^{\text{i}n\theta}  \quad \text{ for all} \quad n \in \mathbb{Z}$$
where $f_n$ are the Fourier coefficients for $f$ along with the asymptotic formula 
$$ H^{(1)}_{n}(kr) = \sqrt{\frac{2}{\pi k r}} \text{e}^{\text{i}kr-\text{i}n\pi/2 -\text{i}\pi/4 } + \mathcal{O}(r^{-3/2}) \quad \text{ as } \quad r \to \infty$$
to derive a computable formula for $\mathcal{Q}$.
From \cite{DSMnf} we have that the explicit formula 
$$ (\mathcal{Q} f)(\theta) = \int\limits_0^{2\pi} Q (\theta, \phi) f(\phi) \text{d}{\phi} \quad \text{where} \quad Q (\theta, \phi) = \frac{ (1-\text{i}) }{2 \pi \sqrt{\pi k}} \sum_{|n| = 0}^{\infty} \frac{\text{e}^{\text{i}n(\theta - \phi - \pi/2)}}{H^{(1)}_{n}(k\rho)}$$
with $f(\phi) =f\big( \rho (\cos\phi \, , \, \sin \phi) \big)$. Here, the constant radius $\rho$ is assumed to be large enough such that $D \subset B(0;\rho)$. When $\Gamma \neq \partial B(0;\rho)$ we can define Dirichlet-to-Far-Field operator $\mathcal{Q}$ by using boundary integral equations. See Section 2 of \cite{kirschbook} for a detailed construction. Now we need an explicit formula for the operator $\mathcal{R}$. To this end, we can use the expression given in \cite{DSMnfme} to write $\mathcal{R}$ as an integral operator with explicit kernel function. This expression uses the fact that  $\hat x =(\cos\theta \, , \, \sin \theta)$ for $\theta \in [0, 2\pi)$ in two dimensions. 
Now, by appealing to sum of angles formula we obtain
$$-\cos(\theta)=\cos(\theta+\pi) \quad \text{and} \quad -\sin(\theta)=\sin(\theta+\pi)$$
which implies that 
$$(\mathcal{R}g)(\theta) = g(\theta+\pi).$$ 
This formula uses the notation that $g(\theta) =g\big( (\cos\theta \, , \, \sin \theta) \big)$.
Therefore, {\color{black}by} using the Fourier series representation for $g$ we can conclude that 
$$ (\mathcal{R} g)(\theta) = \int\limits_0^{2\pi} R (\theta, \phi) g(\phi) \text{d}{\phi} \quad \text{where} \quad R(\theta, \phi) = \frac{1}{2\pi} \sum_{|n| = 0}^{\infty} \text{e}^{\text{i}n(\theta - \phi + \pi)}.$$
 For either $\mathcal{Q}$ or $\mathcal{R}$ we have that the series for the kernel function can be truncated to approximate the operators. It is shown in \cite{DSMnfme,DSMnf} that the truncated series is a valid approximation for both operators.  

{\bf Numerical examples:} Just as in the previous section, we will provide some numerical examples of \eqref{nf-solu} for recovering a sound soft scatterer. To this end, we again assume that the boundary of the scatterer is given by  
$$\partial D = r(\theta) \left(\cos (\theta), \sin(\theta) \right) \quad \text{ for } \quad 0\leq \theta \leq 2 \pi .$$
Here we take $r(\theta)$ to be given by either 
$$r(\theta) =0.25(2+0.5\cos(3\theta)) \quad  \text{or} \quad r(\theta) = 0.75\sqrt{0.75\cos^2(\theta)+0.07\sin^2(\theta)}$$
for an acorn shaped scatterer or peanut shaped scatterer, respectively. In all our examples we take $\Gamma=\partial B(0;5)$ (i.e. the disk with radius=5) and the wave number $k=4$. The location of the sources and receivers will be given by 
$${x}_i = y_i =5(\cos \theta_i , \sin \theta_i) \quad  \text{with}\quad  \theta_i =2\pi(i-1)/64.$$
This corresponds to 64 equally spaced points on the disk.

Now, we need to compute the scattering data $u^s(x_i,y_j)$ by solving \eqref{scalarprob}. Therefore, we use the fact that the scattered field is given by the series expansion 
$$u^s(x,y_j) = \sum\limits_{|n|=0}^{\infty} c_n(y_j) {H}^{(1)}_n (k|x|) \text{e}^{\text{i}n\theta_x} \quad \text{ for each } \quad j = 1, \cdots , 64.$$
The above representation is given by using separation of variables in $\R^2 \setminus\overline{D}$ for the Helmholtz equation. Notice, that the radiation condition \eqref{src} is satisfied by the asymptotic formula for the Hankel functions $ {H}^{(1)}_n (k|x|)$ as $|x| \to \infty$. 
Just as in \cite{DSMnfme} we will truncate the above series representation for $|n|=0,\cdots,15$ and solve for the series coefficients $c_n(y_j)$ such that 
$$ u^s(\widetilde{x},y_j) = -\Phi(\widetilde{x},y_j)  \quad \text{ for all } \quad \tilde{x} \in \partial D$$
and for each $ j = 1, \cdots , 64$. This is done by insuring that the above equality holds for each $\widetilde{x}_i =  r(\theta_i) \left(\cos (\theta_i), \sin(\theta_i) \right)$ for each $ i = 1, \cdots , 64$. So we solve the resulting $64 \times 31$ linear system of equations for each series coefficient $c_n(y_j)$. 
Once we have solved for the coefficients we have that the approximate scattering data on the measurement curve $\Gamma$ is given by 
$$u^s(x_i,y_j) \approx \sum\limits_{|n|=0}^{15} c_n(y_j) \text{H}^{(1)}_n (5k) \text{e}^{\text{i}n\theta_{i}}.$$
The discretized near-field operator with random noise added is given by  
$${\bf N}_{\delta} = \left[u^{s}({x}_i, y_j ) \left( 1 +\delta E_{i,j} \right) \right]_{i,j=1}^{64}$$
with random complex-valued matrix $\mathbf{E}$ satisfying $\| \mathbf{E} \|_2 =1$. 

Another piece we need in order to apply \eqref{nf-solu} is the discretization of the operators $\mathcal{Q}$ and $\mathcal{R}$. From the definition given earlier in this section we have that these operators can be written as integral operators with an explicit kernel given by an infinite series. To approximate the operators, we must first truncate the series representation for the kernel functions. Therefore, we now let 
$$\widetilde{Q} (\theta, \phi) = \frac{ (1-\text{i}) }{2 \pi \sqrt{\pi k}} \sum_{|n| = 0}^{10} \frac{\text{e}^{\text{i}n(\theta - \phi - \pi/2)}}{H^{(1)}_{n}(5k)}  \quad \text{and} \quad \widetilde{R}(\theta, \phi) = \frac{1}{2\pi} \sum_{|n| = 0}^{10} \text{e}^{\text{i}n(\theta - \phi + \pi)}.$$
This corresponds to the {\color{black}truncated series} for the kernel functions. 
We note that in \cite{DSMnfme,DSMnf} the approximate property of the {\color{black}truncated series} approximates was established. Using that  
$$(\mathcal{Q}f)(\theta) \approx  \int\limits_0^{2\pi} \widetilde{Q} (\theta, \phi) f(\phi) \text{d}{\phi} \quad \text{ and } \quad  (\mathcal{R}g)(\theta) \approx  \int\limits_0^{2\pi} \widetilde{R} (\theta, \phi) g(\phi) \text{d}{\phi}$$
we can employ a standard $64$ point Riemann sum collocation approximation for the integrals. From this we obtain a $64 \times 64$ discretization of the operators given by 
$$ {\bf Q} = \big[  \widetilde{Q} (\theta_{i} , \theta_{j}) \big]_{i,j = 1}^{64} \quad \text{and} \quad {\bf R} = \big[  \widetilde{R} (\theta_{i} , \theta_{j}) \big]_{i,j = 1}^{64}.$$
Again, we have taken $\theta_i =2\pi(i-1)/64$ for $i = 1, \cdots ,64$. 

Now that we have the discretized operators we define the discretized far-field transform of the {\color{black}near-field} operator as given by ${\bf Q} {\bf N}_\delta {\bf Q}^\top {\bf R}$. Therefore, we again let $\sigma_j$ be the singular values and ${\bf u}_j$ be the left singular vectors of the matrix
$$ \big({\bf Q} {\bf N}_\delta {\bf Q}^\top {\bf R} \big)_{\sharp} = \big|  \Re\big({\bf Q} {\bf N}_\delta {\bf Q}^\top {\bf R} \big) \big| +  \big| \Im \big({\bf Q} {\bf N}_\delta {\bf Q}^\top {\bf R} \big) \big|.$$ 
By \eqref{nf-solu} we have that the imaging functional 
$$W(z)= \left[ \sum\limits_{j=1}^{64} \frac{\phi^2(\sigma_j  ; \alpha)}{\sigma_j} \big|({\bf u}_j , \boldsymbol{ \ell}_z)\big|^2 \right]^{-1} \,\, \text{ with } \,\,  \boldsymbol{\ell}_z = [\text{e}^{-\text{i}k \hat{x}_i \cdot z}]_{i=1}^{64}$$
with filter function $\phi(t ; \alpha)$  given by \eqref{filters} can be used to approximate the scatterer $D$. Just as in the previous section we have that  $W(z)>0$ for $z \in D$ and $W(z)\approx 0$ for $z \notin D$ by appealing to Theorem \ref{reg-range-lemma}. In all our examples, we take the fixed regularization parameter $\alpha=10^{-6}$ and plot the imaging functional.

\begin{figure}[tb]
\centering 
\includegraphics[scale=0.4]{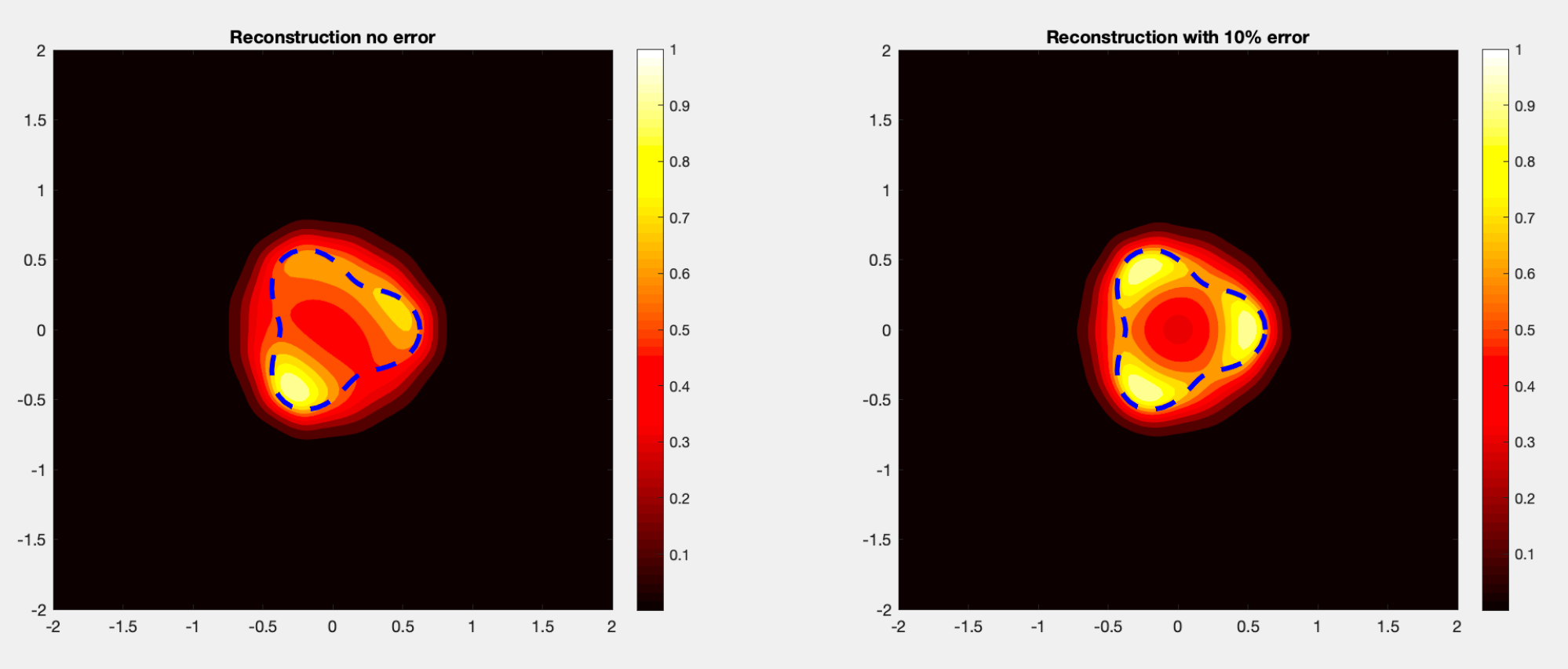}
\caption{Reconstruction of the ball shaped scatterer with the Tikhonov filter given in \eqref{filters}. Left: reconstruction with no added noise and Right: reconstruction with 10$\%$ added noise.}
\label{nf-acorn-recon}
\end{figure}

\begin{figure}[tb]
\centering 
\includegraphics[scale=0.4]{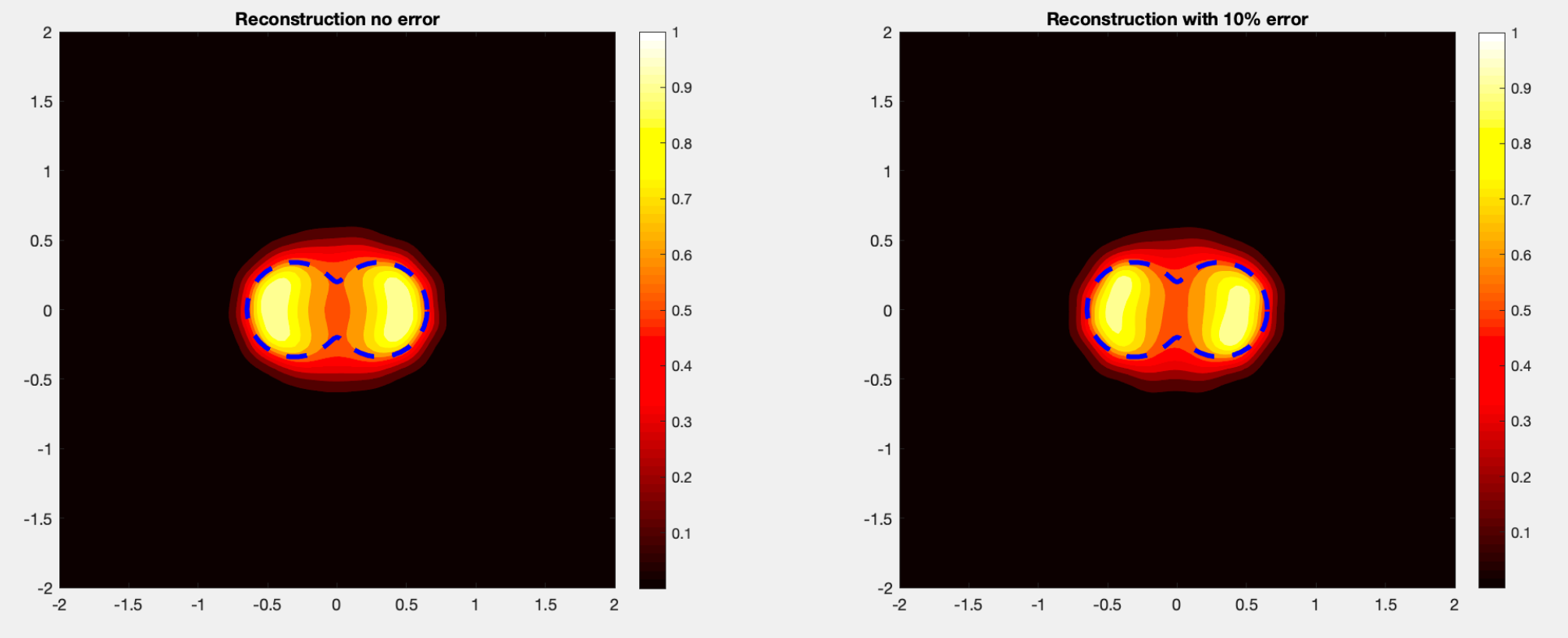}
\caption{Reconstruction of the peanut shaped scatterer with the Tikhonov filter given in \eqref{filters}. Left: reconstruction with no added noise and Right: reconstruction with 10$\%$ added noise.}
\label{nf-peanut-tik-recon}
\end{figure}

\begin{figure}[tb]
\centering 
\includegraphics[scale=0.4]{nf-peanut-tik.pdf}
\caption{Reconstruction of the peanut shaped scatterer with the Spectral cutoff filter given in \eqref{filters}. Left: reconstruction with no added noise and Right: reconstruction with 10$\%$ added noise.}
\label{nf-peanut-tik-recon}
\end{figure}

\begin{figure}[tb]
\centering 
\includegraphics[scale=0.4]{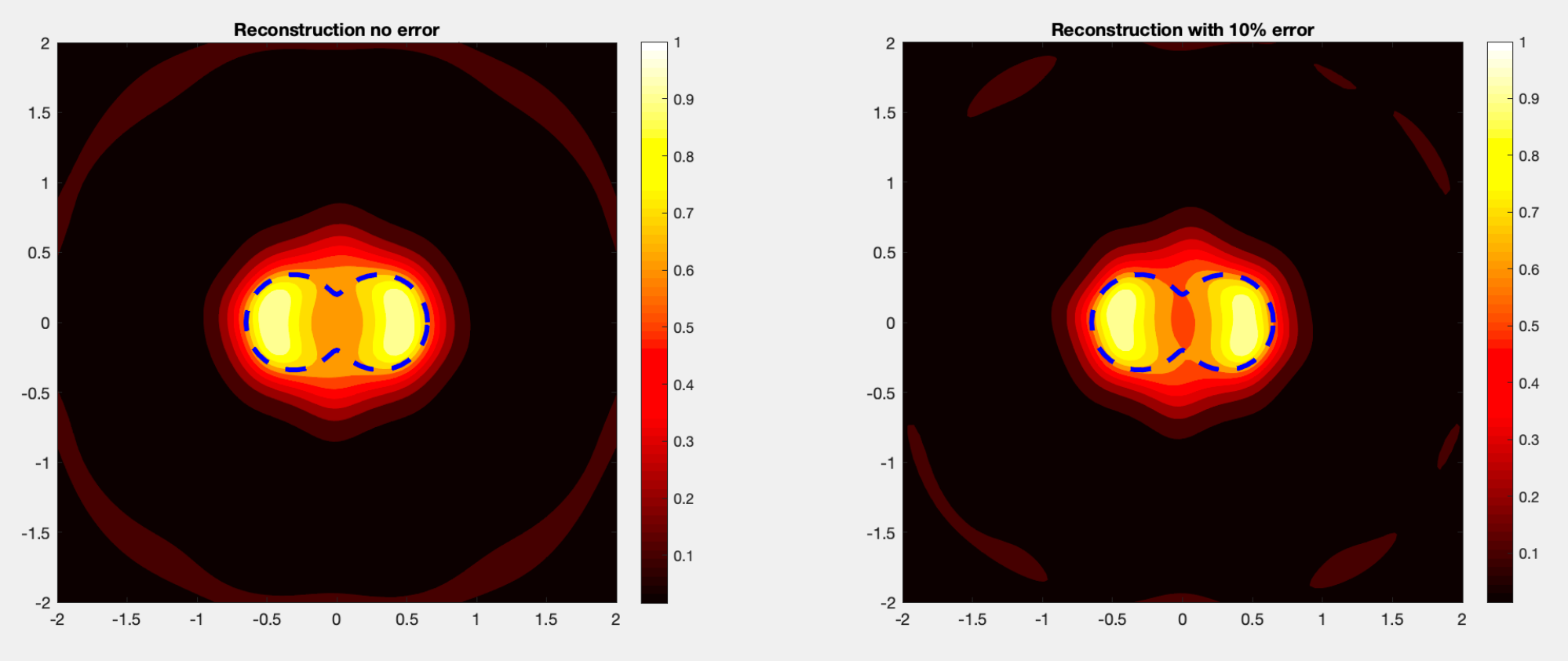}
\caption{Reconstruction of the peanut shaped scatterer with the Landweber filter given in \eqref{filters}. Left: reconstruction with no added noise and Right: reconstruction with 10$\%$ added noise.}
\label{nf-peanut-landweber-recon}
\end{figure}

As we see, in Figures \ref{nf-acorn-recon}--\ref{nf-peanut-landweber-recon} the filter function used in the reconstruction gives little to no difference computational results. Just as in the previous section, this implies that the reconstruction seems to not be sensitive with respect to the regularization scheme used in the imaging functional.

\section{Conclusions}\label{end}
In this paper, we have discussed a regularized version of the factorization method as well as its application to inverse scattering. Note, that this method was originally used in an application to diffuse optical tomography in \cite{RegFM}. We have seen that this method gives a new theoretically valid and analytically rigorous method for solving inverse shape problems. Moreover, we have applied this method to both near and far-field data sets. From our numerical investigation we see that the choice of regularization scheme seems to have little {\color{black}effect} on the reconstructions. A future direction of this research can be to provide theoretical justification of the regularized factorization method for a perturbed data operator. Also, the question of how to pick the regularization parameter is still open. In all our examples, we take the regularization parameter ad-hoc but one should determine a discrepancy principle to optimize the resolution of the imaging functional. {\color{black}From this, the main novelty of this paper is two fold. First, we have given a more extensive numerical study of this new qualitative reconstruction method. We have also, given another analytical and computation method for applying a factorization method for the near-field operator that lacks the symmetric factorization needed in the typical analysis.} Also, we can apply the regularized factorization method to other imaging modalities such as electrical impedance tomography.  \\

\noindent{\bf Acknowledgments:} The research of I. Harris is partially supported by the NSF DMS Grant 2107891.

\bibliographystyle{amsplain}

\begin{thebibliography}{99}
{

\bibitem{arens} 
\newblock T. Arens, 
\newblock Why linear sampling method works, 
\newblock {\it Inverse Problems} {\bf 20} 163--173 (2004).


\bibitem{arens2} 
\newblock T. Arens and A. Lechleiter,
\newblock Indicator Functions for Shape Reconstruction Related to the Linear Sampling Method,
\newblock {\it SIAM J. Imag. Sci.} {\bf 8:1} 513--535 (2015).



\bibitem{GLSM} 
\newblock L. Audibert and H. Haddar, 
\newblock { A generalized formulation of the linear sampling method with exact characterization of targets in terms of far-field measurements}, 
\newblock {\it Inverse Problems} {\bf 30}  035011 (2014).



%
%
%


\bibitem{BaoLi}
\newblock F. Cakoni,  D. Colton, and H. Haddar,
\newblock {Inverse medium scattering for the Helmholtz equation at fixed frequency}, 
\newblock {\it Inverse Problems}  {\bf 21}  1621 (2005).


\bibitem{TE-book}
\newblock F. Cakoni,  D. Colton, and H. Haddar,
\newblock {\it ``Inverse Scattering Theory and Transmission Eigenvalues''}, 
\newblock CBMS Series, SIAM Publications 88, (2016).



\bibitem{FM-wave}
\newblock F. Cakoni,  H. Haddar and A. Lechleiter, 
\newblock On the factorization method for a far field inverse scattering problem in the time domain, 
\newblock {\it SIAM J. Math. Anal.}, 2019, Vol. 51, No. 2: pp. 854--872.



\bibitem{fm-gbc} 
\newblock M. Chamaillard, N. Chaulet, and H. Haddar, 
\newblock Analysis of the factorization method for a general class of boundary conditions, 
\newblock {\it Journal of Inverse and Ill-posed Problems} {\bf 22} No. 5 643--670 (2014) .


\bibitem{Colto2013}
\newblock D. Colton and R. Kress.
\newblock {\em Inverse Acoustic and Electromagnetic Scattering Theory}.
\newblock Springer, New York, 3rd edition, 2013.





\bibitem{CK} 
\newblock D. Colton and A. Kirsch, 
\newblock {A simple method for solving inverse scattering problems in the resonance region}, 
\newblock {\it Inverse Problems} {\bf 12} 383--393  (1996).



\bibitem{range-lemma}
\newblock M. R. Embry, 
\newblock Factorization of operators on Banach space, 
\newblock {\it Proc. Amer. Math. Soc.}  {\bf 38} 587-590 (1973).






\bibitem{Gebauer}
\newblock B. Gebauer, 
\newblock The factorization method for real elliptic problems, 
\newblock {\it Z. Anal. Anwend.}, {\bf 25} 81--102 (2006).


\bibitem{FMheat}
\newblock J. Guo, G. Nakamura, and H. Wang,
\newblock The factorization method for recovering cavities in a heat conductor,
\newblock preprint (2019) arXiv:1912.11590

%


\bibitem{DSMnfme}
\newblock I. Harris, 
\newblock Direct methods for recovering sound soft scatterers from point source measurements.
\newblock {\it  Computation } {\bf 9(11)} 120 (2021).



\bibitem{RegFMme}
\newblock I. Harris, 
\newblock Regularization of the Factorization Method applied to diffuse optical tomography, 
\newblock {\it Inverse Problems}, {\bf 37}  125010 (2021).



\bibitem{DSMnf}
\newblock I. Harris D.-L. Nguyen and T.-P. Nguyen, 
\newblock Direct sampling methods for isotropic and anisotropic scatterers with point source measurements, 
\newblock preprint (2021) arXiv:2107.08138.



\bibitem{Harris-Rome}
\newblock I. Harris and S. Rome, 
\newblock Near field imaging of small isotropic and extended anisotropic scatterers, 
\newblock Applicable Analysis, {\bf 96:10} 1713--1736 (2017).




\bibitem{nf-fm-isotropic} G. Hu, J. Yang, B. Zhang and H. Zhang, 
\newblock {Near-field imaging of scattering obstacles with the factorization method}
\newblock {\it Inverse Problems} {\bf 30} 095005 (2014).


\bibitem{DOT-RtR}
\newblock N. Hyv\"{o}nen,
\newblock Application of a weaker formulation of the factorization method to the characterization of absorbing inclusions in optical tomography,
\newblock {\it Inverse Problems}, {\bf  21} 1331 (2005). 



\bibitem{RtR} 
\newblock N. Hyv\"{o}nen,
\newblock Characterizing inclusions in optical tomography,
\newblock {\it Inverse Problems}, {\bf  21} 737--751 (2004). 



\bibitem{IP-book}
\newblock A. Kirsch,
\newblock \emph{``An Introduction to the Mathematical Theory of Inverse Problems''},
\newblock  2$^{nd}$ edition Springer (New York) 2011.



\bibitem{firstFM}
\newblock A. Kirsch, 
\newblock Characterization of the shape of the scattering obstacle by the spectral data of the far field operator,
\newblock {\it Inverse Problems,} {\bf 14} 1489--512 (1998). 



\bibitem{FMiso}
\newblock A. Kirsch, 
\newblock The MUSIC-algorithm and the factorization method in inverse scattering theory for inhomogeneous media,
\newblock {\it Inverse Problems,} {\bf 18} 1025 (2002). 



\bibitem{kirschpp}
\newblock A. Kirsch, 
\newblock The Factorization Method for a Class of Inverse Elliptic Problems,
\newblock {\it Math. Nachrichten,} {\bf 278} 258--277 (2005).


\bibitem{kirschipbook}
\newblock A. Kirsch, 
\newblock  \emph{``An Introduction to the Mathematical Theory of Inverse Problems''},
\newblock 2$^{nd}$ edition Springer 2011.



\bibitem{kirschbook}
\newblock A. Kirsch and N.  Grinberg, 
\newblock \emph{``The Factorization Method for Inverse Problems''},
\newblock Oxford University Press, Oxford 2008.



\bibitem{RegFM} 
\newblock A. Lechleiter, 
\newblock A regularization technique for the factorization method, 
\newblock {\it Inverse Problems,} {\bf 22} 1605 (2006). 





\bibitem{mclean}
\newblock W. McLean,
\newblock {\it``Strongly elliptic systems and boundary integral equation''}. 
\newblock Cambridge University Press 2000.


\bibitem{Liem} 
\newblock D.-L. Nguyen,
\newblock Shape identification of anisotropic diffraction gratings for TM-polarized electromagnetic waves, 
\newblock {\it Applicable Analysis,} {\bf 93} 1458--1476 (2014).


}


\end{thebibliography}


\end{document}